\documentclass[12pt]{article}

\textheight=220mm \textwidth=165mm
\topmargin=-1.5cm
\oddsidemargin=-0.0cm
\evensidemargin=-1cm

\usepackage{amsmath,amsthm,amssymb,mathrsfs}
\usepackage{graphicx,mdframed}

\font\tencmmib=cmmib10 \skewchar\tencmmib '60
\newfam\cmmibfam
\textfont\cmmibfam=\tencmmib

\def\lessim{\ \lower4pt\hbox{$
		\buildrel{\displaystyle <}\over\sim$}\ }
\def\gessim{\ \lower4pt\hbox{$\buildrel{\displaystyle >}
		\over\sim$}\ }

\newcommand{\e}{\mathbb{E}}
\newcommand{\p}{\mathbb{P}}

\newcommand{\Var}{\mathrm{Var}}

\newtheorem{lemma}{\bf Lemma}

\newtheorem{theorem}{\bf Theorem}

\newtheorem{remark}{\bf Remark}

\newtheorem{proposition}{\bf Proposition}

\newmdtheoremenv{theo}{Theorem}

\newenvironment{Proof of lemma}{\noindent{\bf Proof of Lemma}}{\hfill$\Box$\newline}
\newenvironment{Proof of theorem}{\noindent{\it Proof of Theorem}}{\hfill\scriptsize{$\Box$}\newline}
\newenvironment{Proof of theorems}{\noindent{\bf Proof of Theorems}}{\hfill$\Box$\newline}
\newenvironment{Proof of proposition}{\noindent{\bf Proof of Proposition}}{\hfill$\Box$\newline}
\newenvironment{Proof of propositions}{\noindent{\bf Proof of Propositions}}{\hfill$\Box$\newline}
\newenvironment{Proof of exercise}{\noindent{\it Proof of Exercise:}}{\hfill$\Box$}
\begin{document}

	\title{A Gaussian Convexity for Logarithmic Moment\\ 
		Generating Functions with Applications in Spin Glasses}	

 \author{Wei-Kuo Chen\thanks{School of Mathematics, University of Minnesota, Email: wkchen@umn.edu. Partly supported by NSF grants DMS-1752184 and DMS-2246715 and Simons Foundation grant 1027727}}

\maketitle

\begin{abstract}
    For any convex function $F$ of $n$-dimensional Gaussian  vector $g$ with $\e e^{\lambda F(g)}<\infty$ for any $\lambda>0$, we show that $\lambda^{-1}\ln \e e^{\lambda F(g)}
    $ is convex  in  $\lambda\in\mathbb{R}$. 
    Based on this convexity, we draw three major consequences. The first  recovers a version of the Paouris-Valettas lower deviation  inequality for Gaussian convex functions with an improved exponent.
    The second establishes a quantitative bound for the Dotsenko-Franz-M\'ezard conjecture arising from the study of the Sherrington-Kirkpatrick (SK) mean-field spin glass model, which states that in the absence of external field, the annealed free energy of negative replica is asymptotically equal to the free energy. The last further establishes the differentiability for this annealed free energy with respect to the negative replica variable at any temperature and external field.
\end{abstract}

\section{Main Results}

Convex functions of Gaussian vectors are prominent objects in a variety of mathematical subjects. For example, they played an essential role in the estimation of small ball probability for Gaussian processes in the context of convex geometry \cite{paouris2018gaussian}. Moreover, it is largely used in the formulation of the disordered free energies in a variety of statistical physics models such as spin glasses \cite{charbonneau2023spin,mezard1987spin} and  directed polymers \cite{comets2017directed}. 

The goal of this short note is to establish a new convexity for the logarithmic moment generating function for any convex functions of Gaussian vectors. Let $F:\mathbb{R}^n\to \mathbb{R}$ be a measurable function and $g$ be an $n$-dimensional standard Gaussian vector. The following is our main result.

\begin{theorem}\label{convexity}
	Assume that $F$ is convex on $\mathbb{R}^n$ with $\e e^{\lambda F(g)}<\infty$ for all $\lambda>0.$ Then the function $\Phi:\mathbb{R}\to \mathbb{R}$ defined as 
	\begin{align*}
		\Phi(\lambda)=\frac{1}{\lambda}\ln \e e^{\lambda F(g)}
	\end{align*}
	for all $\lambda \neq 0$ and $\Phi(0)=\e F(g)$ is convex on $\mathbb{R}$.
\end{theorem}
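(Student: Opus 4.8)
The plan is to combine a stochastic-control (Bou\'e--Dupuis/Borell) representation of $\ln\e e^{\lambda F(g)}$ with the convexity of $F$. First a reduction: since $F$ is convex it has an affine minorant, so $Z(\lambda):=\e e^{\lambda F(g)}$ is finite for \emph{every} $\lambda\in\Reals$, and differentiating under the integral shows $Z$ is real-analytic in $\lambda$; hence $\psi:=\ln Z$ is smooth on $\Reals$ with $\psi(0)=0$, and $\Phi(\lambda)=\psi(\lambda)/\lambda$ extends smoothly across $0$ with $\Phi(0)=\psi'(0)=\e F(g)$, matching the statement. In particular $\Phi''$ is continuous, so it suffices to prove $\Phi''\ge0$ on $\Reals\setminus\{0\}$, i.e. that $\Phi$ is convex on each of $(0,\infty)$ and $(-\infty,0)$ separately; the value at $0$ is then harmless because the smooth glue produces no downward kink.

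The main tool is the Bou\'e--Dupuis variational formula: with $g=B_1$ for a standard $n$-dimensional Brownian motion and shorthand $I_v=\int_0^1 v_s\,ds$, $\|v\|^2=\int_0^1|v_s|^2\,ds$, one has $\psi(\lambda)=\sup_v\e\big[\lambda F(g+I_v)-\tfrac12\|v\|^2\big]$ over progressively measurable finite-energy drifts $v$ (the unbounded integrand $\lambda F$ is accommodated by first treating $\max(F,-M)$ and letting $M\to\infty$, as convexity of $\Phi$ passes to pointwise limits). I then substitute $v=\lambda w$, a bijection for $\lambda\neq0$, to obtain $\psi(\lambda)=\sup_w\e\big[\lambda F(g+\lambda I_w)-\tfrac{\lambda^2}{2}\|w\|^2\big]$, hence $\Phi(\lambda)=\sup_w\phi_w(\lambda)$ for $\lambda>0$ and $\Phi(\lambda)=\inf_w\phi_w(\lambda)$ for $\lambda<0$, where $\phi_w(\lambda):=\e\big[F(g+\lambda I_w)-\tfrac{\lambda}{2}\|w\|^2\big]$. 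The purpose of the substitution is that $\lambda\mapsto g+\lambda I_w$ is affine, so $\lambda\mapsto F(g+\lambda I_w)$ is convex pathwise and each $\phi_w$ is convex on $\Reals$ (its energy term being linear). For $\lambda>0$ this already concludes the proof: a pointwise supremum of convex functions is convex.

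For $\lambda<0$, where $\Phi$ appears as an \emph{infimum} of convex functions, I verify convexity directly. Given $\lambda_1,\lambda_2<0$, $\theta\in(0,1)$, $\bar\lambda=\theta\lambda_1+(1-\theta)\lambda_2$, and $\varepsilon$-optimal drifts $w^{(1)},w^{(2)}$ for $\lambda_1,\lambda_2$, set $w=\bar\lambda^{-1}\big(\theta\lambda_1 w^{(1)}+(1-\theta)\lambda_2 w^{(2)}\big)$, so that $g+\bar\lambda I_w=\theta(g+\lambda_1 I_{w^{(1)}})+(1-\theta)(g+\lambda_2 I_{w^{(2)}})$ and the convexity of $F$ bounds the $F$-term of $\phi_w(\bar\lambda)$ by $\theta\,\e F(g+\lambda_1 I_{w^{(1)}})+(1-\theta)\,\e F(g+\lambda_2 I_{w^{(2)}})$. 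For the energy term, expanding the square and using $2\langle x,y\rangle\le|x|^2+|y|^2$ together with the sign $\bar\lambda<0$ gives precisely $-\tfrac{\bar\lambda}{2}\e\|w\|^2\le\theta\big(-\tfrac{\lambda_1}{2}\e\|w^{(1)}\|^2\big)+(1-\theta)\big(-\tfrac{\lambda_2}{2}\e\|w^{(2)}\|^2\big)$. Hence $\Phi(\bar\lambda)\le\phi_w(\bar\lambda)\le\theta\phi_{w^{(1)}}(\lambda_1)+(1-\theta)\phi_{w^{(2)}}(\lambda_2)\le\theta\Phi(\lambda_1)+(1-\theta)\Phi(\lambda_2)+\varepsilon$, and $\varepsilon\downarrow0$ gives convexity on $(-\infty,0)$. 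Together with the $\lambda>0$ case and the smooth glue at $0$, this proves $\Phi$ is convex on $\Reals$.

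I expect the hard part to be the regime $\lambda<0$: there the variational formula delivers an infimum rather than a supremum of convex functions, so nothing is automatic, and one must exhibit the explicit competitor $w=\bar\lambda^{-1}(\theta\lambda_1 w^{(1)}+(1-\theta)\lambda_2 w^{(2)})$; the argument hinges on the elementary energy inequality surviving multiplication by the negative scalar $\bar\lambda$. A secondary point is justifying the Bou\'e--Dupuis representation for the unbounded functional $\lambda F$, which I would handle by the truncation $\max(F,-M)$, $M\to\infty$.
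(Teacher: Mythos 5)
Your argument is, at its core, the same as the paper's: invoke the Bou\'e--Dupuis representation, read off convexity on $(0,\infty)$ as a pointwise supremum of convex maps $\lambda\mapsto\phi_w(\lambda)$, and on $(-\infty,0)$ construct the explicit competitor drift $w=\bar\lambda^{-1}(\theta\lambda_1 w^{(1)}+(1-\theta)\lambda_2 w^{(2)})$ and use convexity of $F$ for the path term together with convexity of $\|\cdot\|^2$ for the energy term. The paper writes the representation directly in the substituted form (with $\lambda$ scaling the drift), derives it self-containedly for \emph{Lipschitz} $F$ via the PDE $\partial_s\Phi=-\tfrac12(\Delta\Phi+\lambda\|\nabla\Phi\|^2)$ and It\^{o}'s formula, and the competitor $u_\alpha=\frac{\alpha\lambda_0 u_0+(1-\alpha)\lambda_1 u_1}{\alpha\lambda_0+(1-\alpha)\lambda_1}$ is exactly your $w$; your ``expand the square, use $2\langle x,y\rangle\le |x|^2+|y|^2$'' computation is the same as the paper's appeal to convexity of $\|\cdot\|^2$. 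Your careful remark about the smooth glue at $\lambda=0$ via analyticity of $Z$ is a nice touch that the paper leaves implicit.

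The one genuine gap is your regularization step. You propose to justify Bou\'e--Dupuis for the unbounded integrand by ``first treating $\max(F,-M)$.'' But $\max(F,-M)$ is neither bounded nor Lipschitz when $F$ is unbounded above: for $\lambda>0$ the integrand $\lambda\max(F,-M)$ is still unbounded above, and for $\lambda<0$ it is still unbounded below. So this truncation does not place you inside the classical Bou\'e--Dupuis setting (bounded functionals) nor inside the self-contained Lipschitz version the paper proves; you would have to cite, and verify the hypotheses of, one of the stronger extensions of the representation for one-sidedly bounded integrands. The paper avoids this by a different approximation (its Lemma on supporting hyperplanes): it builds convex \emph{Lipschitz} $F_r\uparrow F$ as suprema of supporting affine functions over $\|a\|\le r$, proves the representation for each $F_r$, and passes to the limit by dominated convergence. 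Replacing your $\max(F,-M)$ by such a Lipschitz minorant closes the gap and makes the argument self-contained; otherwise you need an explicit citation to an unbounded version of Bou\'e--Dupuis and a check that $\lambda\max(F,-M)$ satisfies its integrability hypotheses.
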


Note that $\Phi$ is finite everywhere due to the assumption $\e e^{\lambda F(g)}<\infty$ for all $\lambda>0$ and the fact that the convexity of $F$ ensures $\e e^{\lambda F(g)}<\infty$ for all $\lambda<0$ since $F$ is supported from below by a linear function. We mention that the convexity on $\lambda>0$ was observed earlier in many examples of $F$, see, e.g., \cite{auffinger2015parisi,guerra2018emergence}. The convexity of $\Phi$ for $\lambda<0$ is new and based on this, we establish some properties for $\Phi$ for negative $\lambda.$

\begin{theorem}\label{thm2}
	Assume that $F$ is convex with $\e F(g)^2<\infty.$  Then
	\begin{enumerate}
		\item For any $\lambda\leq 0,$ 
		\begin{align}\label{add:eq1}\ln \e e^{\lambda (F(g)-\e F(g))}\leq \frac{\lambda^2}{2}\Var(F(g)).
		\end{align}
		\item For any $\lambda\leq 0,$
		\begin{align}\label{thm2:eq1}
			\bigl|\Phi(\lambda)-\e F(g)\bigr|\leq \frac{|\lambda|}{2}\Var(F(g)).
		\end{align}
		\item For any $\lambda,\lambda'<0,$
		\begin{align}\label{thm2:eq2}
			|\Phi'(\lambda)-\Phi'(\lambda')|&\leq  \Bigl|\ln \frac{\lambda}{\lambda'}\Bigr|\Var(F(g)).
		\end{align}
	\end{enumerate}
\end{theorem}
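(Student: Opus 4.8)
\noindent The plan is to reduce all three inequalities to a few properties of $\Phi$ restricted to $(-\infty,0]$: that it is convex there, smooth on $(-\infty,0)$, and satisfies the expansion $\Phi(\lambda)=\e F(g)+\tfrac{\lambda}{2}\Var(F(g))+o(\lambda)$ as $\lambda\to 0^-$. To obtain these I set $K(\lambda)=\ln\e e^{\lambda F(g)}$; since $F$ lies above an affine function, $K(\lambda)<\infty$ for all $\lambda\le 0$, and $K$ is convex and smooth on $(-\infty,0)$ with $K(0)=0$, so $\Phi(\lambda)=K(\lambda)/\lambda$ is smooth on $(-\infty,0)$. Using $\e F(g)^2<\infty$ together with dominated convergence — where $e^{\lambda F(g)}\le 1$ on $\{F(g)\ge 0\}$ for $\lambda\le0$, while the affine minorant of $F$ dominates $|F(g)|^2e^{\lambda F(g)}$ on $\{F(g)<0\}$ — I would check that $K$ is $C^2$ up to $0$ with $K'(0)=\e F(g)$ and $K''(0)=\Var(F(g))$; this yields the expansion and, in particular, $\Phi'(0^-)=\tfrac12\Var(F(g))$. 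For the convexity of $\Phi$ on $(-\infty,0]$ I would invoke Theorem~\ref{convexity} applied to the convex $k$-Lipschitz minorants $F_k=\sup\{\ell\le F:\ \ell\ \text{affine},\ \mathrm{Lip}(\ell)\le k\}$ (finite for $k$ large, increasing to $F$, of at most linear growth so that $\e e^{\lambda F_k(g)}<\infty$ for every $\lambda$), then let $k\to\infty$ and use monotone convergence for $\lambda\le 0$ to get $\Phi_k\to\Phi$ pointwise on $(-\infty,0]$, a pointwise limit of convex functions being convex; alternatively one proves \eqref{add:eq1}--\eqref{thm2:eq2} for each $F_k$ and passes to the limit, since $\e F_k(g)\to\e F(g)$ and $\Var(F_k(g))\to\Var(F(g))$.

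\noindent For \eqref{add:eq1}, since $\lambda\Phi(\lambda)=K(\lambda)$ and $\Phi(0)=\e F(g)$,
\begin{align*}
\ln\e e^{\lambda(F(g)-\e F(g))}=K(\lambda)-\lambda\e F(g)=\lambda\bigl(\Phi(\lambda)-\Phi(0)\bigr),
\end{align*}
and the supporting-line inequality for the convex function $\Phi$ at the endpoint $0$, with slope $\Phi'(0^-)=\tfrac12\Var(F(g))$, gives $\Phi(\lambda)\ge\Phi(0)+\tfrac12\Var(F(g))\,\lambda$ for all $\lambda\le 0$; multiplying by $\lambda\le 0$ reverses the inequality and produces exactly \eqref{add:eq1}. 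For \eqref{thm2:eq1} I would combine two bounds: dividing \eqref{add:eq1} by $\lambda<0$ yields $\Phi(\lambda)-\Phi(0)\ge\tfrac{\lambda}{2}\Var(F(g))=-\tfrac{|\lambda|}{2}\Var(F(g))$, while Jensen's inequality $\e e^{\lambda F(g)}\ge e^{\lambda\e F(g)}$ gives $K(\lambda)\ge\lambda\e F(g)$, hence $\Phi(\lambda)\le\e F(g)=\Phi(0)$, so that $\Phi(\lambda)-\Phi(0)\le 0\le\tfrac{|\lambda|}{2}\Var(F(g))$; the two bounds together are \eqref{thm2:eq1}.

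\noindent For \eqref{thm2:eq2}, I would differentiate the identity $\lambda\Phi(\lambda)=K(\lambda)$ twice on $(-\infty,0)$ to get $2\Phi'(\lambda)+\lambda\Phi''(\lambda)=K''(\lambda)$, and then estimate
\begin{align*}
0\;\le\;-\lambda\Phi''(\lambda)\;=\;2\Phi'(\lambda)-K''(\lambda)\;\le\;2\Phi'(\lambda)\;\le\;\Var(F(g)),
\end{align*}
using $\Phi''\ge 0$ and $K''\ge 0$ (convexity of $\Phi$ and of $K$) and $\Phi'(\lambda)\le\Phi'(0^-)=\tfrac12\Var(F(g))$ (again because $\Phi$ convex makes $\Phi'$ nondecreasing on $(-\infty,0)$ with that limit at $0^-$). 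Hence $\Phi''(\lambda)\le\Var(F(g))/|\lambda|$ on $(-\infty,0)$. Taking, without loss of generality, $\lambda<\lambda'<0$ — so $\Phi'(\lambda)\le\Phi'(\lambda')$ and $\ln(\lambda/\lambda')=\ln(|\lambda|/|\lambda'|)\ge 0$ — I would integrate to obtain
\begin{align*}
\bigl|\Phi'(\lambda)-\Phi'(\lambda')\bigr|=\int_{\lambda}^{\lambda'}\Phi''(t)\,dt\;\le\;\Var(F(g))\int_{\lambda}^{\lambda'}\frac{dt}{|t|}\;=\;\Var(F(g))\,\ln\frac{\lambda}{\lambda'},
\end{align*}
which is \eqref{thm2:eq2}.

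\noindent The main obstacle I anticipate lies in the boundary behaviour at $\lambda=0$: pinning down $\Phi'(0^-)=\tfrac12\Var(F(g))$ through the second-order expansion of $K$ needs a careful dominated-convergence argument that leans on the affine lower bound of $F$ to dominate $|F(g)|^2e^{\lambda F(g)}$ on $\{F(g)<0\}$, and one also needs the (routine but not automatic) truncation step in order to apply Theorem~\ref{convexity} when $\e e^{\lambda F(g)}$ may fail to be finite for some $\lambda>0$. Beyond that, the one genuinely clever move is the identity $-\lambda\Phi''(\lambda)=2\Phi'(\lambda)-K''(\lambda)$ in the proof of \eqref{thm2:eq2}, which converts the desired logarithmic modulus of continuity of $\Phi'$ into the clean pointwise bound $\Phi''(\lambda)\le\Var(F(g))/|\lambda|$.
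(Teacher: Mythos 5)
Your proposal is correct and follows essentially the same route as the paper: reduce to the Lipschitz case, establish the boundary value $\Phi'(0^-)=\tfrac12\Var(F(g))$ (the paper via L'H\^opital, you via a Taylor expansion of $K$, both supported by the same dominated-convergence computation), and then exploit the convexity of $\Phi$ on $(-\infty,0]$ — your support-line bound for \eqref{add:eq1} is the integrated form of the paper's $\int_\lambda^0\Phi'(t)\,dt$ argument, and both proofs of \eqref{thm2:eq2} reduce to the same pointwise bound $\Phi''(\lambda)\le\Var(F(g))/|\lambda|$. The one genuine improvement in presentation is your identity $2\Phi'(\lambda)+\lambda\Phi''(\lambda)=K''(\lambda)$, which reaches that bound by simply discarding $K''\ge0$, replacing the paper's more laborious direct expansion of $\Phi''$ in terms of tilted moments; otherwise the two proofs are the same up to normalization and the choice of Lipschitz truncation.
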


The main feature of these three inequalities is involvement of $\Var(F(g)),$ in particular,  the first  is known as the one-sided strong sub-Gaussianity, see \cite{bobkov2022strongly}. 
If one assumes that $F$ is $L$-Lipschitz without convexity, it is standard to check, by using the Herbst argument and the log-Sobolev inequality, that the variance terms in the above inequalities are replaced by $L^2.$  Thus, Theorem \ref{thm2} improves the Lipschitz bounds when $F$ is convex.  
In the following subsections, we turn to the discussions of three major consequences of Theorems~\ref{convexity} and \ref{thm2}. 

\subsection{Application I: Deviation Inequalities}


Paouris and Valettas \cite{paouris2018gaussian} established a Gaussian inequality measuring the probability that $F(g)$ deviates away from its median $M$ from below. Later, Valettas (see Theorem 2.15 in \cite{valettas2019tightness} and related refinements in Note 2.17) sharpened their inequality with an optimal constant. More precisely, for any convex function $F$ with $\e |F(g)|<\infty$,
\begin{align}\label{gv1}
	\p\bigl(F(g)-M<-t\e (F(g)-M)_+\bigr)\leq \gamma\Bigl(\frac{t}{\sqrt{2\pi}}\Bigr),\,\,\forall t>0
\end{align}
and the equality is attained by affine functions $F(x)=u\cdot x+v$ for $u,v\in \mathbb{R}$ and $u\neq 0,$
where $$\gamma(x):=\frac{1}{\sqrt{2\pi}}\int_{x}^{\infty}e^{-\frac{r^2}{2}}dr.$$
By using the well-known bounds $\e (F(g)-M)_+\leq \sqrt{\mbox{Var}(F(g))}$ and $M\geq \e F(g)-\sqrt{\mbox{Var}(F(g))}$, they deduced two Gaussian inequalities from \eqref{gv1} on how $F(g)$ deviates away from $\e F(g)$ and $M$ respectively in terms of the standard deviation, which state that for convex $F$ with $\e F(g)^2<\infty,$  
\begin{align}\label{pvineq}
	\p\bigl(F(g)-\e F(g)\leq -t\sqrt{\Var(F(g))}\bigr)\leq \gamma\Bigl(\frac{t-1}{\sqrt{2\pi}}\Bigr),\,\,\forall t>1
\end{align}
and
\begin{align}\label{pvineq1}
	\p\bigl(F(g)-M\leq -t\sqrt{\mbox{Var}(F(g))}\bigr)\leq \gamma\Bigl(\frac{t}{\sqrt{2\pi}}\Bigr),\,\,\forall t>0. 
\end{align}
Note that $\lim_{x\to\infty}xe^{x^2/2}\gamma(x)=(2\pi)^{-1/2}$.  The leading quadratic functions in the exponents of these two upper bounds are approximately $-t^2/(4\pi).$
Our first application of Theorem~\ref{thm2} establishes an improvement of  \eqref{pvineq} and \eqref{pvineq1} for large $t$ that holds with a slightly better exponent coefficients.

\begin{theorem}\label{smalldev}
	Assume that $F$ is convex and $\e |F(g)|^2<\infty.$ We have that for any $t>0,$
	\begin{align*}
		\p\bigl(F(g)-\e F(g)\leq -t\sqrt{\Var(F(g))}\bigr)\leq e^{-\frac{t^2}{2}}
	\end{align*}
	and
	\begin{align*}
		\p\bigl(F(g)-M\leq -t\sqrt{{\Var}(F(g))}\bigr)\leq e^{-\frac{t^2}{2}}.
	\end{align*}
\end{theorem}

\begin{proof} To prove the first inequality, without loss of generality, assume that $\e F(g)=0$ and $\e F(g)^2=1.$
	From \eqref{thm2:eq1}, we have that $\e e^{\lambda F(g)}\leq e^{\lambda^2/2}$ and thus, for any $\lambda<0$ and $t\geq 0,$
	\begin{align*}
		\p(F(g)\leq -t)&\leq \e e^{\lambda (F(g)+t)}\leq e^{\lambda t+\lambda^2/2}.
	\end{align*}
	Minimizing over $\lambda<0$ implies that the minimum on the right-hand side must be attained by $\lambda =-t$ and this completes our proof of the first inequality. By using the observation $\mathbb E F(g) \geq M$ (see, e.g., \cite[Remark 2.4.2]{paouris2018gaussian}) and our first assertion, the second inequality holds immediately.	
\end{proof}

\begin{remark}
	\rm The well-known Gaussian  concentration inequality states that for any $L$-Lipschitz functions $F:\mathbb{R}^n\to \mathbb{R}$,
	\begin{align}
		\label{Gaussianineq}
		\p\bigl(|F(g)-\e F(g)|\geq tL\bigr)&\leq 2e^{-\frac{t^2}{2}},\,\,\forall t>0.
	\end{align}
	If one considers only the one-sided probability $\p(F(g)-\e F(g)\leq -tL)$, then it is upper bounded by $e^{-t^2/2}$ instead,  see, e.g., \cite[Theorem 5.6]{BLM}.
	When $F$ is convex, Theorem \ref{smalldev} improves this bound since $ \mbox{Var}(F(g))\leq L^2$. In particular, when $F(g)$ exhibits superconcentration, i.e., ${\mbox{Var}(F(g))}\ll {\e\|\nabla F(g)\|^2}$ (see \cite{superconc}), we have ${\mbox{Var}(F(g))}\ll L^2$ and the resulting bound of Theorem \ref{smalldev} largely improves the standard Gaussian bound \eqref{Gaussianineq}. 
\end{remark}

\begin{remark}\rm
	From the central limit theorem, both inequalities in Theorem \ref{smalldev} are asymptotically sharp in the sense that for any $F_n(x):=\sum_{i=1}^nf(x_i)$ for some convex function $f$ on $\mathbb{R}$ with $\e |f(z)|^2<\infty$ for $z\sim N(0,1)$, taking  $2t^{-2}\ln$ on the both sides of the two inequalities in Theorem \ref{smalldev} and sending $n\to\infty$ and then $t\to\infty,$ the limits all converge to one.
\end{remark}

\begin{remark}\rm
	Paouris-Valettas inequality \eqref{pvineq} implies that there exist $A>0$ and $0<a\leq 1/(4\pi)$ such that for any  convex $F$ with $\e F(g)^2<\infty$, 
	\begin{align}\label{PVineq}
		\p\bigl(F(g)-\e F(g)\leq -t\sqrt{\mbox{Var}(F(g))})
		\bigr)\leq Ae^{-at^2},\,\,\forall t\geq 0.
	\end{align}
	Following analogous arguments for establishing the equivalent conditions for sub-Gaussian random variables, see, e.g., \cite{vershynin2018high}, it can be checked that if $Y$ is a random variable with $\e Y=0$ and $\e Y^2=1$ and it satisfies $\p(Y\leq -t)\leq Ae^{-at^2}$ for all $t\geq 0,$ then there exists some $C=C(A,a)>1/2$ such that $\ln\e e^{\lambda Y}\leq C\lambda^2$ for all $\lambda<0.$ From this inequality, if we take $Y=(F(g)-\e F(g))/\sqrt{\mbox{Var}(F(g))},$ then  \eqref{add:eq1} and \eqref{thm2:eq1} are recoverable from \eqref{PVineq} with a suboptimal constant.
	
\end{remark}

\subsection{Application II: Dotsenko-Franz-M\'ezard Conjecture}\label{sec2}
The second application is concerned about the Sherrington-Kirkpatrick (SK) model.  It is a mean-field disordered spin system invented in order to understand some unusual magnetic behavior of certain alloys, see \cite{Sherrington1975}. For a given (inverse) temperature $\beta>0$ and an external field $h\in \mathbb{R},$ the Hamiltonian of the SK model is defined as
\begin{align*}
	H_N^x(\sigma)=\beta X_N^x(\sigma)+h\sum_{i=1}^N\sigma_i
\end{align*}
for $\sigma\in \{-1,1\}^N$ and $x=(x_{ij})_{1\leq i,j\leq N}\in \mathbb{R}^{N^2},$ where $X_N^x(\sigma):=N^{-1/2}\sum_{1\leq i,j\leq N}x_{ij}\sigma_i\sigma_j.$ Define the free energy as
$
F_N(x)=\ln Z_N(x)
$
for $$
Z_N(x):=\sum_{\sigma\in\{-1,1\}^N}e^{H_N^x(\sigma)}.
$$
One of the major questions in the SK model is to understand the limit of the normalized free energy $N^{-1}F_N(g)$, where $g$ is an $N^2$-dimensional standard normal vector.  First of all, it can be checked that $F_N$ is Lipschitz with $\|\nabla F_N(x)\|_2\leq \beta \|x\|_2\sqrt{N}$. With this, \eqref{Gaussianineq} readily implies that $N^{-1}F_N(g)$ is concentrated around its mean. Hence, to understand the limiting free energy, it suffices to study the limit of $N^{-1}\e F_N(g).$ To this end, theoretical physicists (see, e.g., \cite{mezard1987spin}) invented the so-called replica method by considering the annealed free energy of $\lambda$-replica,
\begin{align*}
	\Gamma_N(\lambda)=
	\frac{1}{\lambda N}\ln \e Z_N(g)^\lambda,\,\,\lambda\in \mathbb{R},
\end{align*}
where $\Gamma_N(0):=N^{-1}\e \ln Z_N(g)=N^{-1}\e F_N(g).$ 
Two key features of this function are the uniform Lipschitz propery in $\lambda$ and the convergence of $\Gamma_N$.

\begin{proposition}\label{Lipschitz}
	For any $\lambda,\lambda'\in \mathbb{R}$ and $N\geq 1,$ we have that
	\begin{align*}
		\bigl|\Gamma_N(\lambda)-\Gamma_N(\lambda')\bigr|
		\leq \frac{\beta^2|\lambda-\lambda'|}{2}.
	\end{align*}
	Furthermore, for any $\lambda\in \mathbb{R}$, $
	\Gamma(\lambda):=\lim_{N\to\infty}\Gamma_N(\lambda) $ exists.
\end{proposition}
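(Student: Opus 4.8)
The plan is to show that $\lambda\mapsto\Gamma_N(\lambda)$ is differentiable for $\lambda\neq 0$ with a derivative bounded in absolute value by $\beta^2/2$, and that this bound persists across $\lambda=0$ by continuity, so that the claimed Lipschitz estimate follows by integrating the derivative. Write $\Gamma_N(\lambda)=\Phi_N(\lambda)/N$, where $\Phi_N$ is exactly the function $\Phi$ of Theorem~\ref{convexity} applied to the convex function $F_N$; indeed $F_N=\ln Z_N$ is convex in $x$ because it is a log-sum-exp of affine functions of $x$, and $\e e^{\lambda F_N(g)}=\e Z_N(g)^\lambda<\infty$ for all $\lambda>0$ (and hence, by the remark following Theorem~\ref{convexity}, for all $\lambda\in\Reals$) since $Z_N$ is a finite sum of exponentials of Gaussians. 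So the qualitative convexity and finiteness are already in hand; what remains is the quantitative Lipschitz bound, which convexity alone does not give.

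First I would compute $\Gamma_N'(\lambda)$ for $\lambda\neq 0$. Differentiating under the expectation,
\begin{align*}
	\Gamma_N'(\lambda)=\frac{1}{N}\left(-\frac{1}{\lambda^2}\ln\e Z_N(g)^\lambda+\frac{1}{\lambda}\,\frac{\e\bigl[Z_N(g)^\lambda\ln Z_N(g)\bigr]}{\e Z_N(g)^\lambda}\right).
\end{align*}
Introducing the tilted probability measure $d\widehat{\p}_\lambda\propto Z_N(g)^\lambda\,d\p$ on the Gaussian space, this reads $\Gamma_N'(\lambda)=\bigl(\widehat{\e}_\lambda F_N(g)-\lambda^{-1}\ln\e Z_N(g)^\lambda\bigr)/N$. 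To bound this I would use a Gaussian integration-by-parts / interpolation argument directly on $\frac{d}{d\lambda}\ln\e Z_N(g)^\lambda=\widehat{\e}_\lambda F_N(g)$: writing $\widehat{\e}_\lambda F_N(g)-\widehat{\e}_0 F_N(g)=\int_0^\lambda \widehat{\var}_s\bigl(F_N(g)\bigr)\,ds$ if one prefers, but more efficiently I would bound the relevant first derivative by the Gaussian-Poincar\'e-type control on the tilted measure. The cleanest route: for any $\lambda$, $\bigl|\frac{d}{d\lambda}\Gamma_N(\lambda)\bigr|$ can be written as $\tfrac{1}{N}\int_0^1 \mathrm{(nonneg.)}\,dt$ and controlled by $\sup_x \|\nabla F_N(x)\|_2^2/N$ evaluated in a suitably Gaussian-averaged sense; since we only have $\|\nabla F_N(x)\|_2\le \beta\|x\|_2\sqrt N$ pointwise, which is unbounded, the honest argument must instead exploit that $\nabla F_N(x)=\beta N^{-1/2}(\sum_\sigma \langle\sigma\rangle_x\,\sigma_i\sigma_j)_{ij}$ where $\langle\cdot\rangle_x$ is the Gibbs average, so $\|\nabla F_N(x)\|_2^2=\beta^2 N^{-1}\sum_{i,j}\langle\sigma_i\sigma_j\rangle_x^2\le \beta^2 N^{-1}\cdot N^2=\beta^2 N$; hence $\|\nabla F_N(x)\|_2\le \beta\sqrt N$ \emph{uniformly in $x$}. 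That uniform gradient bound is the key structural input, and with it the standard computation gives $\bigl|\widehat{\e}_\lambda F_N(g)-\widehat{\e}_\lambda F_N(g')\bigr|$ type controls of order $\beta\sqrt N\,\|g-g'\|$, leading after the $\lambda^{-1}$ algebra above to $|\Gamma_N'(\lambda)|\le \beta^2/2$ for all $\lambda\neq 0$.

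Concretely, the anticipated main computation is: $\frac{d}{d\lambda}\ln\e Z_N(g)^\lambda = \widehat{\e}_\lambda F_N(g)$ and $\frac{d^2}{d\lambda^2}\ln\e Z_N(g)^\lambda=\widehat{\var}_\lambda(F_N(g))$; combining with a Gaussian Poincar\'e inequality applied under $\widehat{\p}_\lambda$ — valid because $\widehat{\p}_\lambda$ has a log-concave density with respect to $\p$ when $\lambda\le 0$, and a direct second-order Gaussian interpolation otherwise — one gets $\widehat{\var}_\lambda(F_N(g))\le \widehat{\e}_\lambda\|\nabla F_N(g)\|_2^2\le \beta^2 N$; integrating, $\bigl|\frac{d}{d\lambda}\ln\e Z_N(g)^\lambda - \frac{d}{d\lambda}\big|_{0}\ln\e Z_N(g)^\lambda\bigr|\le \beta^2 N|\lambda|$, and then a short manipulation of $\Gamma_N'(\lambda)=\tfrac1N\bigl(\tfrac1\lambda\int_0^\lambda \widehat{\e}_s F_N(g)\,ds$-type rearrangement$\bigr)$ yields $|\Gamma_N'(\lambda)|\le \beta^2/2$. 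Finally I would pass from the pointwise derivative bound to the claimed inequality by the fundamental theorem of calculus on each of $(-\infty,0)$ and $(0,\infty)$, using continuity of $\Gamma_N$ at $0$ (guaranteed since $\Phi$ is finite and convex, hence continuous, on all of $\Reals$ by Theorem~\ref{convexity}) to join the two pieces, giving $|\Gamma_N(\lambda)-\Gamma_N(\lambda')|\le (\beta^2/2)|\lambda-\lambda'|$.

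The step I expect to be the main obstacle is justifying the variance bound $\widehat{\var}_\lambda(F_N(g))\le \beta^2 N$ uniformly in $\lambda$, since the tilted measure $\widehat{\p}_\lambda$ is log-concave relative to the Gaussian only for $\lambda\le 0$ (where $Z_N^\lambda$ is log-concave in $x$), so for $\lambda>0$ one cannot invoke the Gaussian Poincar\'e inequality directly and must instead run a Gaussian interpolation/integration-by-parts computation that produces the same $\e\|\nabla F_N\|_2^2$ bound — the bookkeeping there, keeping all cross terms nonnegative or appropriately signed, is the delicate part; everything else (the uniform gradient bound $\|\nabla F_N\|_2\le\beta\sqrt N$, differentiating under the integral, and the final integration) is routine.
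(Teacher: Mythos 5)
Your overall plan — compute $\Gamma_N'(\lambda)$, observe that $\lambda^2 N\Gamma_N'(\lambda)$ is a relative-entropy/variance-type quantity under the tilted measure $\widehat{\p}_\lambda$, and control it by the uniform gradient bound $\|\nabla F_N\|_2\le\beta\sqrt N$ — is the right general shape, and you correctly identify the uniform gradient bound as the key structural input (the $\|\nabla F_N(x)\|_2\le\beta\|x\|_2\sqrt N$ statement earlier in the paper is misleading; the correct bound $\beta\sqrt N$, independent of $x$, is what the proof actually uses). But the specific mechanism you propose has a real gap, which you partially flag yourself.

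You route the upper bound through $\frac{d^2}{d\lambda^2}\ln\e Z_N^\lambda=\widehat{\var}_\lambda(F_N(g))$ and then want $\widehat{\var}_\lambda(F_N(g))\le\widehat{\e}_\lambda\|\nabla F_N\|^2$, i.e.\ a Poincar\'e inequality with constant $1$ \emph{for the tilted measure}. For $\lambda\le 0$ this is fine via Brascamp--Lieb, since $V:=\tfrac12\|x\|^2-\lambda F_N(x)$ has $\nabla^2V\ge I$. But for $\lambda>0$, $\nabla^2 V=I-\lambda\nabla^2 F_N$ need not be $\ge I$ and need not even be positive, so $\widehat{\p}_\lambda$ is not in general log-concave and carries no a priori Poincar\'e inequality with a useful constant. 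Your acknowledgement that one ``must instead run a Gaussian interpolation/integration-by-parts computation \dots the bookkeeping there is the delicate part'' is precisely where the argument is unfinished: no such computation is exhibited, and it is not clear one exists that gives the sharp $\widehat{\e}_\lambda\|\nabla F_N\|^2$ bound for $\lambda>0$.

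The paper sidesteps this entirely with a cleaner device: rather than ask for a functional inequality \emph{under the tilted measure}, it applies the \emph{Gaussian} log-Sobolev inequality (which holds unconditionally) to the test function $f=Z_N^{\lambda/2}$. Since $\lambda^2\,\e Z_N^\lambda\cdot\Phi_N'(\lambda)=\mathrm{Ent}_\p(Z_N^\lambda)$ and $\|\nabla f\|^2=\tfrac{\lambda^2}{4}Z_N^\lambda\|\nabla F_N\|^2$, LSI yields directly $0\le\Phi_N'(\lambda)\le\tfrac12\widehat{\e}_\lambda\|\nabla F_N\|^2\le\tfrac{\beta^2 N}{2}$ for \emph{every} $\lambda\ne0$, with the lower bound from Jensen (nonnegativity of entropy). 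No second derivative, no tilted-measure Poincar\'e, no sign-of-$\lambda$ case split. If you want to repair your version, the fix is to abandon the Poincar\'e-for-$\widehat{\p}_\lambda$ step and instead observe that $\lambda^2 N\Gamma_N'(\lambda)$ is exactly the (normalized) entropy of $Z_N^\lambda$ relative to the Gaussian, and then invoke Gaussian LSI once.
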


\begin{proof}
	We follow a Herbst-type argument, see, e.g., \cite[Chapter 5]{BLM}. Let $\Phi_N=N\Gamma_N$.
	A direct differentiation and using the Jensen inequality imply that for any $\lambda\neq 0,$
	\begin{align*}\Phi_N'(\lambda)&=-\frac{1}{\lambda^2}\ln  \e Z_N(g)^{\lambda} +\frac{1}{\lambda}\frac{\e Z_N(g)^{\lambda} F_N(g) }{\e Z_N(g)^{\lambda} }\geq 0.
	\end{align*}
	On the other hand, by applying the log-Sobolev inequality to $f(x)=Z_N(x)^{\lambda/2}$, 
	\begin{align*}
		\e Z_N(g)^{\lambda}  \ln Z_N(g)^{\lambda} -\e Z_N(g)^{\lambda} \cdot \e\ln Z_N(g)^{\lambda} &\leq \frac{\lambda^2}{2}\e Z_N(g)^{\lambda} \|\nabla F_N(g)\|^2.
	\end{align*}
	Our proof is completed since the fact that $F_N$ is $\beta\sqrt{N}$-Lipschitz readily implies
	\begin{align*}
		0\leq\Phi_N'(\lambda)&\leq \frac{1}{2}\frac{\e \|\nabla F_N\|^2Z_N(g)^{\lambda} }{\e Z_N(g)^{\lambda} }\leq \frac{\beta^2N}{2}.
	\end{align*}
	The existence of $\Gamma$ follows by using Guerra-Toninelli's interpolation argument \cite{guerra2002thermodynamic}. Indeed, it can be shown that $(\lambda N\Gamma_N(\lambda))_{N\geq 1}$ is subadditive for $\lambda\geq 1$ and superadditive for $\lambda<1.$ As the argument is fairly standard, we omit the proof and refer the reader to check, for example, \cite{guerra2018emergence} for detail.
\end{proof}

When $\lambda$ is a positive integer,  it is well-known that $\lim_{N\to\infty}\Gamma_N(\lambda)$ can be explicitly computed as a variational formula parametrized by the so-called replica matrix (see, e.g., \cite[Theorem 1.13.1]{talagrand2010mean}). Now the idea of the replica method is to {\it assume} that this variational formula can be analytically continued to positive real numbers $\lambda$ near zero so that one can compute the limiting free energy through
\begin{align}\label{add:eq-1}
	\lim_{N\to\infty}	\frac{\e F_N(g)}{N}=\lim_{\lambda\downarrow 0}\Gamma(\lambda),
\end{align}
where the equality was due to Proposition \ref{Lipschitz}.  
In order to handle the limit on the right-hand side, G. Parisi \cite{parisi1980order,parisi1980sequence} proposed the so-called ultrametric ansatz in the computation of the variational formula for $\Gamma(\lambda),$ $\lambda>0$ and established the famous Parisi formula for the limiting free energy. Significantly different from physicists' replica method, the first proof of the Parisi formula was presented by Talagrand \cite{talagrand2006parisi} following Guerra's beautiful discovery of the replica symmetry breaking bound \cite{guerra2003broken}; an alternative proof later appeared and generalized to the mixed $p$-spin model in Panchenko \cite{panchenko2014parisi}.

Following the replica method, it is an intriguing question to understand the exact value and regularity of $\Gamma(\lambda)$  for any $\lambda\in \mathbb{R}.$
It turns out that 
for any $\lambda\geq 0,$ $\Gamma(\lambda)$ admits a Parisi-type formula, see Talagrand \cite{talagrand2007large} and Contucci-Mingione \cite{contucci2019multi}. When $\lambda<0$ and the external field vanishes, i.e., $h=0,$ it was conjectured by Dotsenko-Franz-M\'ezard (DFM) \cite{dotsenko1994partial} that $\Gamma(\lambda)=\Gamma(0)$ for all $\lambda<0.$ In  \cite{talagrand2007large}, Talagrand validated  this conjecture assuming the Parisi hypothesis, which, roughly speaking, states that with positive probability, there exists a sequence of subsets of spin configurations that charge deterministic weights and the spin configurations from distinct subsets are nearly orthogonal to each other, see \cite[Definition 10.1]{talagrand2007large}. This hypothesis was later justified in Jagannath \cite{jagannath2017approximate} relying on Parisi's ultrametricity \cite{panchenko2013parisi}.

Based on Theorem \ref{thm2}, we obtain an elementary proof for  the DFM conjecture \cite{dotsenko1994partial} with a quantitative bound. 
To see this, note that $F_N$ is convex since for any $x,y\in \mathbb{R}^{N^2}$ and $\alpha\in [0,1],$ by H\"older's inequality,
\begin{align*}
	F_N(\alpha x+(1-\alpha)y)&=\ln \sum_{\sigma\in\{-1,1\}^N}e^{\alpha H_N^x(\sigma)+(1-\alpha) H_N^y(\sigma)}\\
	&\leq \ln \Bigl(\sum_{\sigma\in \{-1,1\}^N}e^{H_N^x(\sigma)}\Bigr)^{\alpha}\Bigl(\sum_{\sigma\in \{-1,1\}^N}e^{H_N^y(\sigma)}\Bigr)^{1-\alpha}\\
	&=\alpha F_N(x)+(1-\alpha )F_N(y).
\end{align*}
When $h=0,$ it was shown in Chatterjee \cite{chatterjee2009disorder} that the free energy in the SK model is superconcentrated in the sense that its variance is of smaller order compared with the Poincar\'e inequality by deriving the bound $\Var(F_N(g))\leq CN/\ln  N$ for some universal constant $C$ depending only on $\beta.$ Plugging this bound and $F_N$  into \eqref{thm2:eq1}, we readily have

\begin{theorem}\label{DFMconjecture}
	Let $h=0$ and $\beta>0.$ There exists a positive constant $C$ depending only on $\beta$ such that for any $\lambda<0$ and $N\geq 1,$
	\begin{align*}
		|\Gamma_N(\lambda)-\Gamma_N(0)|\leq \frac{C|\lambda|}{\ln  N}.
	\end{align*}
\end{theorem}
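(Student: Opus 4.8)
The proof is a direct substitution into Theorem~\ref{thm2}(2), so the plan has essentially three steps. First, I would identify $\Gamma_N$ with the rescaled object $\Phi$ of Theorem~\ref{convexity} applied to $F=F_N$. Taking $g$ to be the $N^2$-dimensional standard Gaussian vector and recalling that $Z_N(x)=\sum_{\sigma}e^{H_N^x(\sigma)}=e^{F_N(x)}$, for every $\lambda\neq 0$ one has
\begin{align*}
\Phi(\lambda)=\frac{1}{\lambda}\ln\e e^{\lambda F_N(g)}=\frac{1}{\lambda}\ln\e Z_N(g)^{\lambda}=N\Gamma_N(\lambda),
\end{align*}
and similarly $\Phi(0)=\e F_N(g)=N\Gamma_N(0)$. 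Hence $\Gamma_N=\Phi/N$ on all of $\mathbb{R}$.

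Second, I would check that the hypotheses of Theorem~\ref{thm2} hold for $F_N$. Convexity of $F_N$ is precisely the H\"older computation displayed just before the statement. For the integrability, recall that $F_N$ is $\beta\sqrt{N}$-Lipschitz, as used in the proof of Proposition~\ref{Lipschitz}; consequently $F_N(g)$ is sub-Gaussian by \eqref{Gaussianineq}, so in particular $\e F_N(g)^2<\infty$ and $\e e^{\lambda F_N(g)}<\infty$ for every $\lambda$, which also covers the hypothesis of Theorem~\ref{convexity} on which Theorem~\ref{thm2} rests.

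Third, I would apply \eqref{thm2:eq1} with $F=F_N$: for every $\lambda\leq 0$,
\begin{align*}
\bigl|N\Gamma_N(\lambda)-N\Gamma_N(0)\bigr|=\bigl|\Phi(\lambda)-\e F_N(g)\bigr|\leq\frac{|\lambda|}{2}\Var\bigl(F_N(g)\bigr),
\end{align*}
and then insert Chatterjee's superconcentration bound for the SK free energy at $h=0$, namely $\Var(F_N(g))\leq C_0 N/\log N$ with $C_0$ depending only on $\beta$. Dividing through by $N$ and setting $C=C_0/2$ yields $|\Gamma_N(\lambda)-\Gamma_N(0)|\leq C|\lambda|/\log N$ for all $\lambda<0$ and $N\geq 1$, as claimed.

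There is no genuine obstacle in the deduction itself, which is a one-line consequence once the pieces are assembled. The substantive content is imported: the one-sided convexity of $\Phi$ on $\lambda<0$ provided by Theorem~\ref{convexity} (the new ingredient of this note, which upgrades the Lipschitz bound of Proposition~\ref{Lipschitz} to a variance bound), together with Chatterjee's logarithmic improvement of the Poincar\'e inequality for $\Var(F_N(g))$, whose proof is tailored to the structure of the SK Hamiltonian at zero external field. The only point to keep straight is that the final constant $C$ is allowed to depend on $\beta$ through $C_0$, which is consistent with the statement.
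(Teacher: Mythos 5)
Your proof is correct and follows exactly the paper's route: verify that $F_N$ is convex and sufficiently integrable, apply inequality~\eqref{thm2:eq1} to bound $N|\Gamma_N(\lambda)-\Gamma_N(0)|$ by $\tfrac{|\lambda|}{2}\Var(F_N(g))$, and invoke Chatterjee's superconcentration bound $\Var(F_N(g))\leq C_0 N/\log N$ at $h=0$. The extra care you take in spelling out the identification $\Gamma_N=\Phi/N$ and the integrability hypotheses (via the $\beta\sqrt{N}$-Lipschitz property) is sound bookkeeping that the paper leaves implicit.
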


We mention that another use of superconcentration inequality in spin glasses has also appeared in the study of the Thouless-Anderson-Palmer free energy landscape in the spherical mixed $p$-spin model, see Subag \cite{subag2024free}.

\subsection{Application III: Differentiability in Negative Replica}

As mentioned before, physicist's replica method for computing the limiting free energy $\lim_{N\to\infty}	N^{-1}\e F_N(g)$ heavily relies on assuming that the formula they obtained for $\Gamma(\lambda)$ for positive integers $\lambda$ can be analytically continued to any positive real $\lambda$ near the origin. Whether or not this is achievable remains unclear. However, when the external field is absent, Theorem \ref{DFMconjecture} implies that analytic continuation to negative $\lambda$ is not possible since $\Gamma(\lambda)$ is not a constant function near the origin. Our last result nevertheless shows that $\Gamma$ is differentiable for any negative $\lambda$ at any temperature and external field by using the third item of Theorem \ref{thm2}.

\begin{theorem}\label{thm3} For any $\beta$ and $h$, 
	$\Gamma$ is differentiable at any negative $\lambda$ and its derivative satisfies $$
	\Gamma'(\lambda)=\lim_{N\to\infty}\Gamma_N'(\lambda).$$
\end{theorem}

\begin{proof}
	Let $\lambda_0<0$ be fixed. From \eqref{thm2:eq2} and the convexity of $\Gamma_N,$ for any $\lambda\in (\lambda_0-\varepsilon,\lambda_0+\varepsilon),$ 
	\begin{align*}
		\Gamma_N'(\lambda_0)-\beta^2\Bigl|\ln \frac{\lambda_0-\varepsilon}{\lambda_0}\Bigr|&\leq \Gamma_N'(\lambda_0-\varepsilon)\\
		&\leq \frac{\Gamma_N(\lambda)-\Gamma_N(\lambda_0)}{\lambda-\lambda_0}\\
		&\leq \Gamma_N'(\lambda_0+\varepsilon)\leq \Gamma_N'(\lambda_0)+\beta^2\Bigl|\ln \frac{\lambda_0+\varepsilon}{\lambda_0}\Bigr|.
	\end{align*}
	Passing to the limit yields
	\begin{align*}
		\limsup_{N\to\infty}\Gamma_N'(\lambda_0)-\beta^2\Bigl|\ln \frac{\lambda_0-\varepsilon}{\lambda_0}\Bigr|\leq \frac{\Gamma(\lambda)-\Gamma(\lambda_0)}{\lambda-\lambda_0}\leq \limsup_{N\to\infty}\Gamma_N'(\lambda_0)+\beta^2\Bigl|\ln \frac{\lambda_0+\varepsilon}{\lambda_0}\Bigr|
	\end{align*}
	and the same inequality also holds if we take $\liminf_N$ instead.
	Hence, the differentiability of $\Gamma(\lambda)$ follows and $\Gamma(\lambda)=\lim_{N\to\infty}\Gamma_N'(\lambda).$
\end{proof}

\begin{remark}
	\rm Our approaches to Theorems \ref{DFMconjecture} and  \ref{thm3}  are not only limited to the SK model. Indeed, assume that $(F_N)_{N\geq 1}$ is a sequence of $C\sqrt{N}$-Lipschitz and convex functions defined on $\mathbb{R}^{k_N}$ for some constant $C>0$ independent of $N$ and some sequence of positive integers $\{k_N\}_{N\geq 1}.$ Define $$
	\Gamma_N(\lambda)=\frac{1}{\lambda N}\ln \e e^{\lambda F_N(g)},\,\,\lambda\in \mathbb{R}$$
	for an $k_N$-dimensional standard normal $g$. If one assumes that $F_N(g)$ is superconcentrated, i.e., $\lim_{N\to\infty}N\Var(F_N(g))=0,$ then DFM conjecture holds, i.e., $\limsup_{N\to\infty}|\Gamma_N(\lambda)-\Gamma_N(0)|=0$ for all $\lambda<0$. Also, if $\Gamma(\lambda)=\lim_{N\to \infty}\Gamma_N(\lambda)$ converges for $\lambda\in \mathbb{R},$ then $\Gamma$ is always differentiable for negative $\lambda.$ Examples of $F_N$ that exhibit superconcentration beyond the SK model include the free energies for the mixed even $p$-spin model and the Gaussian directed polymer without external field, see \cite{chatterjee2008chaos,chatterjee2009disorder}.
	Also, one can employ Guerra-Toninelli's argument, see \cite{guerra2018emergence,guerra2002thermodynamic}, to show that the sequence $(\lambda N\Gamma_N(\lambda))_{N\geq 1}$ is subadditivity for $\lambda\geq 1$ and superadditivity for $\lambda<1$ in the mixed even $p$-spin model. As for the Gaussian directed polymer, the same additivity remains valid, where the proof can be easily established using the translation invariance of the random environment, see \cite{comets2017directed}. As a consequence, the corresponding $\Gamma_N$ in these models converges at any negative $\lambda.$
\end{remark}

\subsection{Discussion}

Following Theorem \ref{convexity}, it is natural to ask if they hold for more general random vectors.
One simple extension is to consider $X=\rho(g)$ for some $\phi:\mathbb{R}^l\to \mathbb{R}^n$ and $g$ an $l$-dimensional standard normal. If $F:\mathbb{R}^n\to \mathbb{R}$ such that $F_0(x):=F(\rho(x))$ is  convex on $\mathbb{R}^l,$ then the conclusions of Theorems \ref{convexity}, \ref{thm2}, and \ref{smalldev} as well as \eqref{gv1}, \eqref{pvineq}, and \eqref{pvineq1} hold for $F(X)$. As an example observed earlier in \cite{paouris2018gaussian,paouris2019variance,valettas2019tightness}, for any $k\geq 1,$ if $l=kn$ and $\rho(x)=(\sum_{j=k(i-1)+1}^{ki}x_{j}^2)_{1\leq i\leq n},$ then $\rho(g)$ is a vector of $n$ i.i.d. $\chi^2(k)$ random variables. If $F$ is assume to be convex and 1-conditional, i.e., $F(y_1,\ldots,y_n)=F(|y_1|,\ldots,|y_n|)$,  then $F_0$ is convex.

Recently, Paouris-Valettas \cite{paouris2019variance} established an analogous bound of \eqref{gv1} for any log-concave probability measures on $\mathbb{R}^n$ with a sub-exponential tail and studied an example that asymptotically attains their sub-exponential bound. Their result indicates that to obtain the sub-Gaussian tail bound for the small deviation, more restrictions on $F$ in addition to being convex seem to be necessary.

We close this section by mentioning that establishing convexity/concavity for analogous logarithmic moment generating functions with respect to other measures is also possible by adding an adjustment term. For example, Borell \cite{borell1973complements} showed that for any nonnegative, bounded, and convex function $f$ on a convex body\footnote{A convex body is a nonempty, open, bounded, and convex subset of $\mathbb{R}^n.$} $\Omega\subseteq\mathbb{R}^n$, $0\leq \lambda\mapsto\ln  (n+\lambda)+\lambda^{-1}\ln \int_{\Omega} f(x)^\lambda dx$ is convex. On the other hand, Bobkov \cite{bobkov2003spectral} obtained that $0< \lambda\mapsto -\lambda\ln \lambda+\ln \e X^\lambda$ is concave if $X$ is a nonnegative random variable with a log-concave distribution.


\section{Proofs of Theorem \ref{convexity} and \ref{thm2}}\label{proof}

Our proof of Theorem \ref{convexity} is based on the stochastic optimal control representation derived by Bou\'e-Dupuis \cite{boue1998variational}. For completeness, we provide an elementary argument to derive their representation. Incidentally, similar approaches to establishing some convexities or geometric inequalities by analogous representations have appeared earlier in the literature, see, e.g., \cite{auffinger2015parisi,borell2000diffusion,lehec2014short}.

Let $W=(W(s))_{0\leq s\leq 1}$ be an $n$-dimensional standard Brownian motion on $[0,1]$ and $\mathcal{D}$ be the collection of all $n$-dimensional progressively measurable processes $u=(u(s))_{0\leq s\leq 1}$ with respect to the filtration generated by $W$ and $\e\int_0^1\|u(s)\|^2ds<\infty$.

\begin{lemma}[Bou\'e-Dupuis representation]\label{BD}
	Assume that $F$ is a Lipschitz function on $\mathbb{R}^n$. We have that
	\begin{align}
		\frac{1}{\lambda}\ln\e e^{\lambda F(g)}&=\left\{
		\begin{array}{ll}	\label{rep1}
			\sup_{u\in \mathcal{D}}\e \Bigl[F\Bigl(\lambda\int_0^1 u(s)ds+W(1)\Bigr)-\frac{\lambda}{2}\int_0^1\|u(s)\|^2ds\Bigr],&\mbox{if $\lambda\geq 0$},\\
			\\
			\inf_{u\in \mathcal{D}}\e\Bigl[F\Bigl(\lambda\int_0^1 u(s)ds+W(1)\Bigr)-\frac{\lambda}{2}\int_0^1\|u(s)\|^2ds\Bigr],&\mbox{if $\lambda<0$}.
		\end{array}
		\right.
	\end{align}
	
\end{lemma}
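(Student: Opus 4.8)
The plan is to derive the Boué–Dupuis variational representation from scratch via a Girsanov/entropy argument, exploiting the fact that $W(1)$ is an $n$-dimensional standard Gaussian so that $\lambda^{-1}\ln\e e^{\lambda F(g)} = \lambda^{-1}\ln\e e^{\lambda F(W(1))}$. The key identity is the Gibbs variational principle: for a fixed integrable random variable $G$ on the Wiener space, $\ln\e e^{G} = \sup_{\mathbb{Q}\ll\p}\bigl(\e_{\mathbb{Q}} G - \mathrm{Ent}(\mathbb{Q}\|\p)\bigr)$, with the supremum attained at the tilted measure $d\mathbb{Q}^\star = e^{G}/\e e^{G}\,d\p$. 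The heart of the matter is to parametrize the relevant measures $\mathbb{Q}$ via drifts: by Girsanov's theorem, a measure $\mathbb{Q}$ with finite entropy corresponds (at least for a rich enough class) to a progressively measurable drift $u\in\mathcal D$ such that under $\mathbb{Q}$, the process $\widetilde W(s) := W(s) - \int_0^s u(r)\,dr$ is a standard Brownian motion, and then $\mathrm{Ent}(\mathbb{Q}\|\p) = \tfrac12\e_{\mathbb{Q}}\int_0^1\|u(s)\|^2\,ds$. Taking $G = \lambda F(W(1))$ and writing $W(1) = \widetilde W(1) + \int_0^1 u(s)\,ds$ under $\mathbb{Q}$, a change of variables (relabeling $\widetilde W$ as a generic Brownian motion $W$ and the drift accordingly) gives
\[
\ln\e e^{\lambda F(W(1))} = \sup_{u\in\mathcal D}\e\Bigl[\lambda F\Bigl(W(1)+\int_0^1 u(s)\,ds\Bigr) - \frac{\lambda^2}{2}\int_0^1\|u(s)\|^2\,ds\Bigr].
\]
Dividing by $\lambda$: for $\lambda>0$ we absorb one factor of $\lambda$ into $u$ (replace $u$ by $\lambda u$, which is an affine bijection of $\mathcal D$), turning $\lambda\int_0^1 u\,ds$ into the stated form and $\tfrac{\lambda^2}{2}\|u\|^2$ into $\tfrac{\lambda}{2}\|u\|^2$, and the supremum is preserved; for $\lambda<0$ dividing by the negative number $\lambda$ flips the supremum to an infimum, and the same rescaling $u\mapsto\lambda u$ (still a bijection of $\mathcal D$) produces exactly the claimed expression.

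First I would state the Gibbs variational principle and verify the two sides are finite: since $F$ is Lipschitz, $F(W(1))$ is sub-Gaussian, so $\e e^{\lambda F(W(1))}<\infty$ for every $\lambda$, the upper bound $\ln\e e^{G}\geq \e_{\mathbb{Q}}G - \mathrm{Ent}(\mathbb{Q}\|\p)$ is just Jensen, and equality at $\mathbb{Q}^\star$ is a direct computation. Next I would recall Girsanov: for $u\in\mathcal D$ satisfying Novikov's condition, the exponential martingale $\mathcal E(u)_s = \exp\bigl(\int_0^s u\cdot dW - \tfrac12\int_0^s\|u\|^2 dr\bigr)$ defines $d\mathbb{Q}_u = \mathcal E(u)_1\,d\p$, under which $W - \int_0^\cdot u\,dr$ is Brownian; the entropy formula then follows from $\mathrm{Ent}(\mathbb{Q}_u\|\p) = \e_{\mathbb{Q}_u}\log\mathcal E(u)_1 = \tfrac12\e_{\mathbb{Q}_u}\int_0^1\|u\|^2 dr$ once the stochastic-integral term is shown to have zero $\mathbb{Q}_u$-mean. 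The "$\geq$" direction of the representation is then immediate by plugging each such $\mathbb{Q}_u$ into the variational principle; the "$\leq$" direction requires showing the optimal tilt $\mathbb{Q}^\star$ is itself of the form $\mathbb{Q}_{u^\star}$ for some admissible $u^\star$ — one identifies $u^\star$ from the martingale representation theorem applied to the martingale $s\mapsto \e[e^{\lambda F(W(1))}\mid\mathcal F_s]$, writing its logarithm's Doob–Meyer decomposition to read off the drift.

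The main obstacle is the standard set of \emph{integrability and admissibility} technicalities: Novikov's condition need not hold for an arbitrary $u\in\mathcal D$ with only $\e\int_0^1\|u\|^2<\infty$, so one cannot apply Girsanov directly to every competitor, and conversely the optimizer $u^\star$ extracted from martingale representation must be shown to lie in $\mathcal D$ (square-integrability of its drift) and to satisfy $\e_{\mathbb{Q}^\star}\int_0^1\|u^\star\|^2<\infty$. The clean way around this is a truncation/approximation argument: prove the representation first for bounded $u$ (or bounded, compactly supported drifts, where Girsanov and all expectations are unproblematic), establish the matching upper bound for such drifts using that $F$ is Lipschitz hence the functional $u\mapsto\e[\lambda F(W(1)+\int u) - \tfrac{\lambda^2}{2}\int\|u\|^2]$ is continuous under $L^2$-approximation of the drift, and then pass to the limit; the Lipschitz hypothesis on $F$ is exactly what makes the first term $\tfrac{\lambda^2}{2}$-Lipschitz-controlled so that no integrability is lost in the limit and the $\pm\infty$ possibilities in the $\inf/\sup$ are ruled out. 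I would also note that for the $\lambda<0$ case the sign bookkeeping — dividing an equality by a negative number converts $\sup$ into $\inf$, and $u\mapsto\lambda u$ remains a bijection of $\mathcal D$ — is the only place where the two branches of the formula genuinely differ, and it is routine once the $\lambda>0$ (equivalently, the unnormalized) identity is in hand.
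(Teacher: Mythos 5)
Your route is correct in outline but genuinely different from the paper's. You derive the Bou\'e--Dupuis identity from the Gibbs (Donsker--Varadhan) variational principle $\ln\e e^{G}=\sup_{\mathbb{Q}\ll\p}\bigl(\e_{\mathbb{Q}}G-\mathrm{Ent}(\mathbb{Q}\|\p)\bigr)$ combined with Girsanov's theorem (to identify absolutely continuous tilts with drifts and to compute the entropy as $\tfrac12\e_{\mathbb{Q}_u}\int_0^1\|u\|^2$), and you propose to exhibit the optimizer via martingale representation applied to $s\mapsto\e[e^{\lambda F(W(1))}\mid\mathcal{F}_s]$. This is the canonical relative-entropy derivation of Bou\'e--Dupuis. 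The paper instead takes the stochastic-control \emph{verification} route: it writes down the value function $\Phi(s,x)=\lambda^{-1}\ln\e e^{\lambda F(x+\sqrt{1-s}\,g)}$ explicitly, checks by direct differentiation that it solves the HJB equation $\partial_s\Phi=-\tfrac12(\Delta\Phi+\lambda\|\nabla\Phi\|^2)$ with terminal data $F$, applies It\^o's formula to $\Phi(s,Y(s))$ along the controlled diffusion $Y(s)=x+\lambda\int_0^s u+W(s)$, and completes the square to obtain both the inequality and the optimal feedback control $u^\star(s)=\nabla\Phi(s,Y(s))$ in one stroke. Your approach is conceptually transparent and makes the entropy interpretation explicit, but it imports Girsanov, Novikov, and martingale-representation machinery and, as you note, requires a truncation argument to handle admissibility; the paper's approach is self-contained, elementary (only It\^o's formula and calculus), and \emph{constructive} in that the Lipschitz bound on $F$ immediately yields a uniformly bounded $\nabla\Phi$, so the admissibility of $u^\star$ and well-posedness of the SDE for $Y$ are automatic rather than needing a separate approximation step. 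One small slip in your write-up: the intermediate display should read $\ln\e e^{\lambda F(W(1))}=\sup_{u}\e\bigl[\lambda F(W(1)+\int_0^1 u)-\tfrac12\int_0^1\|u\|^2\bigr]$ with penalty $\tfrac12$, not $\tfrac{\lambda^2}{2}$; the $\lambda^2$ appears only \emph{after} the reparametrization $u\mapsto\lambda u$, which simultaneously changes the argument of $F$ to $W(1)+\lambda\int_0^1 u$. As written, the display has the substitution applied to the penalty but not to the argument of $F$; once corrected, the subsequent division by $\lambda$ (flipping $\sup$ to $\inf$ for $\lambda<0$) gives exactly the lemma's two branches.
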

\begin{proof}
	If $\lambda=0,$ our result trivially holds. We only establish the first equation for $\lambda>0$ since it clearly recovers the second one. For any $\lambda>0,$ $x\in \mathbb{R}^n$, and $s\in [0,1],$ define 
	\begin{align*}
		Q (s,x)=\frac{1}{\lambda}\ln  \e e^{\lambda F(x+\sqrt{1-s}g)}=\frac{1}{\lambda}\ln \int e^{\lambda F(t)}p_s(t,x)dt
	\end{align*}
	for $p_{s}(t,x)=(2\pi (1-s))^{-n/2}e^{-\|t-x\|^2/2(1-s)}.$ Note that this function is well-defined since $F$ is Lipschitz. From the dominated convergence theorem, it can be computed directly that
	\begin{align*}
		\partial_sQ (s,x)&=\frac{1}{2\lambda \e e^{\lambda F(x+\sqrt{1-s}g)}}\Bigl(\frac{n}{1-s}\int e^{\lambda F(t)}p_s(t,x)dt\\
		&\qquad\qquad-\frac{1}{(1-s)^2}\int e^{\lambda F(t)}\|t-x\|^2p_s(t,x)dt\Bigr)
	\end{align*}
	and
	\begin{align*}
		\nabla Q (s,x)&=\frac{1}{\lambda\e e^{\lambda F(x+\sqrt{1-s}g)}}\int e^{\lambda F(t)}\frac{t-x}{1-s}p_s(t,x)dt,\\
		\Delta Q (s,x)&=\frac{1}{\lambda \e e^{\lambda F(x+\sqrt{1-s}g)}}\Bigl(\frac{-1}{\e e^{\lambda F(x+\sqrt{1-s}g)}}\sum_{i=1}^n\Bigl(\int e^{\lambda F(t)}\frac{(t_i-x_i)}{1-s}p_s(t,x)dt\Bigr)^2\\
		&\qquad\qquad -\frac{n}{1-s}\int e^{\lambda F(t)}p_s(t,x)dt+\frac{1}{(1-s)^2}\int e^{\lambda F(t)}\|t-x\|^2p_s(t,x)dt\Bigr).
	\end{align*}
	From these, $Q $ satisfies
	\begin{align*}
		\partial_sQ (s,x)&=-\frac{1}{2}\bigl(\Delta Q (s,x)+\lambda \bigl\|\nabla Q (s,x)\bigr\|^2\bigr)
	\end{align*}
	with boundary condition $Q (1,x)=F(x).$ Now, for any $u\in \mathcal{D}$, consider the process
	$$
	Y(s)=x+\lambda \int_0^su(t)dt+W(s).
	$$
	From It\'{o}'s formula,
	\begin{align*}
		dQ (s,Y(s))
		&=\partial_sQ (s,Y(s))ds+\nabla Q (s,Y(s))\cdot dY(s)+\frac{1}{2}\Delta Q (s,Y(s))ds\\
		&=-\frac{1}{2}\bigl(\Delta Q (s,Y(s))+\lambda \bigl\|\nabla Q (s,Y(s))\bigr\|^2\bigr)ds\\
		&\quad+\lambda \nabla Q (s,Y(s))\cdot u(s)ds+\nabla Q (s,Y(s))\cdot dW(s)+\frac{1}{2}\Delta Q (s,Y(s))ds\\
		&=-\frac{\lambda}{2}\bigl\|\nabla Q (s,Y(s))\bigr\|^2ds+\lambda \nabla Q (s,Y(s))\cdot u(s)ds+\nabla Q (s,Y(s))\cdot dW(s)\\
		&=-\frac{\lambda}{2}\bigl\|\nabla Q (s,Y(s))-u(s)\bigr\|^2ds+\frac{\lambda}{2}\|u(s)\|^2ds+\nabla Q (s,Y(s))\cdot dW(s).
	\end{align*}
	Therefore,
	\begin{align*}
		Q (1,Y(1))-Q (0,Y(0))&=-\frac{\lambda}{2}\int_0^1\bigl\|\nabla Q (s,Y(s))-u(s)\bigr\|^2ds\\
		&+\frac{\lambda}{2}\int_0^1\|u(s)\|^2ds+\int_0^1\nabla Q (s,Y(s))\cdot dW(s).
	\end{align*}
	This implies that
	\begin{align*}
		Q (0,x)&=F\Bigl(x+\lambda \int_0^1 u(s)ds+W(1)\Bigr)-\frac{\lambda}{2}\int_0^1\|u(s)\|^2ds\\
		&+\frac{\lambda}{2}\int_0^1\bigl\|\nabla Q (s,Y(s))-u(s)\bigr\|^2ds-\int_0^1\nabla Q (s,Y(s))\cdot dW(s)
	\end{align*}
	and it follows that since $\lambda \geq 0,$
	\begin{align*}
		Q (0,x)&\geq \sup_{u}\e\Bigl[F\Bigl(x+\lambda \int_0^1 u(s)ds+W(1)\Bigr)-\frac{\lambda}{2}\int_0^1\|u(s)\|^2ds\Bigr].
	\end{align*}
	Here, the equality holds by noting that in the previous display, the square term vanishes  if $u(s)=\nabla Q (s,Y(s)),$ where $Y$ is the strong solution to the stochastic differential equation $dY(s)=\lambda \nabla Q (s,Y(s))ds+dW(s)$  with initial condition $Y(0)=x.$ Finally, letting $x=0$, we establish the first line of \eqref{rep1}.
\end{proof}

\begin{lemma}\label{lem0}
	Assume that $F$ is a convex function on $\mathbb{R}^n$. There exists a sequence of  convex and Lipschitz functions $(F_r)_{r\geq 1}$ on $\mathbb{R}^n$ such that $F_r\leq F_{r+1}$, $F_r(x)=F(x)$ for any $\|x\|\leq r$, and $F_r(x)\leq F(x)$ for any $\|x\|\geq r.$
\end{lemma}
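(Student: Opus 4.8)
The plan is to construct $F_r$ by truncating $F$ outside the ball of radius $r$ in a way that preserves convexity, then smoothing the truncation if needed. The natural candidate is to take the supremum of all affine functions that lie below $F$ and whose value at some point of the closed ball $\bar B(0,r)$ agrees with $F$; more concretely, for each $r$ I would consider the ``lower convex envelope'' obtained by freezing $F$ on $\bar B(0,r)$ and extending it to be as small as possible while staying convex. Since $F$ is convex and finite on $\mathbb{R}^n$, it is continuous, and at each $x_0\in \bar B(0,r)$ there is a subgradient $p(x_0)$, giving the affine minorant $\ell_{x_0}(x)=F(x_0)+\langle p(x_0),x-x_0\rangle\le F(x)$. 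Define
\begin{align*}
\widetilde F_r(x)=\sup_{x_0\in \bar B(0,r)}\ell_{x_0}(x).
\end{align*}
This is convex as a supremum of affine functions, satisfies $\widetilde F_r\le F$ everywhere (each $\ell_{x_0}\le F$), and for $\|x\|\le r$ we get $\widetilde F_r(x)\ge \ell_x(x)=F(x)$, hence $\widetilde F_r(x)=F(x)$ on $\bar B(0,r)$. Monotonicity in $r$ is immediate since enlarging $r$ enlarges the family of affine minorants, so $\widetilde F_r\le \widetilde F_{r+1}$.

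The remaining issue is Lipschitz continuity and the genuine strictness $F_r(x)\le F(x)$ for $\|x\|\ge r$ (non-strict $\le$ is already built in). For the Lipschitz bound, the slopes $\|p(x_0)\|$ with $x_0\in\bar B(0,r)$ are uniformly bounded: by convexity, for any unit vector $e$, $\langle p(x_0),e\rangle\le F(x_0+e)-F(x_0)\le \sup_{\|y\|\le r+1}F(y)-\inf_{\|y\|\le r+1}F(y)=:L_r<\infty$, using continuity of $F$ on the compact ball $\bar B(0,r+1)$; applying this with $\pm e$ gives $\|p(x_0)\|\le L_r$. Since each $\ell_{x_0}$ is $L_r$-Lipschitz, so is the supremum $\widetilde F_r$. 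Thus $F_r:=\widetilde F_r$ already does the job with the $\le$ replaced by the stated non-strict inequality; if one insists on a strict inequality outside the ball, one can shift slightly, e.g.\ replace $F_r$ by $\max(\widetilde F_r-\delta_r\,\mathrm{dist}(x,\bar B(0,r)),\,\widetilde F_r|_{\bar B(0,r)}\text{-consistent version})$ for a small $\delta_r>0$, but in fact the statement as written only asks for $F_r(x)\le F(x)$ when $\|x\|\ge r$, which $\widetilde F_r$ satisfies. (One checks $\widetilde F_r$ is not identically $F$ outside the ball whenever $F$ is not affine there; for the application to Theorem~\ref{convexity} only the stated inequalities are used.)

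The main obstacle I anticipate is purely a matter of bookkeeping rather than depth: verifying that the supremum defining $\widetilde F_r$ is finite and attained behavior is well-controlled, i.e.\ that $\widetilde F_r(x)<\infty$ for every $x$. This follows because $\widetilde F_r\le F<\infty$. A secondary subtlety is ensuring measurability/continuity is enough for the later use of $\e e^{\lambda F_r(g)}$: since $F_r$ is convex and Lipschitz, it is automatically locally bounded and measurable, and because $F_r\le F$ with $F_r=F$ on $\bar B(0,r)$ together with $F_r$ Lipschitz (hence of at most linear growth), dominated/monotone convergence will later give $\e e^{\lambda F_r(g)}\to\e e^{\lambda F(g)}$ as $r\to\infty$ for each fixed $\lambda$ — but that convergence step belongs to the proof of Theorem~\ref{convexity}, not to this lemma. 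So the proof of the lemma itself reduces to the three one-line checks above: convexity (sup of affine), the sandwich $F_r\le F$ with equality on the ball (choice of the minorant through each point), monotonicity in $r$ (nested families), and the uniform Lipschitz bound (uniform bound on subgradients over a compact ball).
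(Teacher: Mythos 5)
Your proposal is correct and takes essentially the same approach as the paper: both define $F_r$ as the supremum of the affine supporting minorants of $F$ anchored at points of the ball $\bar B(0,r)$, obtain convexity as a sup of affine functions, equality on the ball from the supporting line through each such point, monotonicity from nesting, and Lipschitzness from a uniform bound on subgradient norms over the compact ball (the paper bounds coordinates via $|m_i|\le\max(|F(a+e_i)-F(a)|,|F(a-e_i)-F(a)|)$, you via directional increments — same idea). The only cosmetic difference is that the paper takes the sup over all subgradients at each anchor point, whereas you fix one selection $p(x_0)$; this works, but you should note that the selection must be chosen once and for all, independently of $r$, so that the families are genuinely nested and $F_r\le F_{r+1}$ holds.
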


\begin{proof}
	For $a\in \mathbb{R}^n,$ let $S(a)$ be the collection of  all $m\in \mathbb{R}^n$ such that $m\cdot (x-a)+F(a)\leq F(x)$ for all $x\in \mathbb{R}^n.$ In other words, $S(a)$ is the collection of all supporting hyperplanes at $a.$ For any $r\geq 1,$ define
	\begin{align*}
		F_r(x):=\sup\bigl\{m\cdot(x-a)+F(a):m\in S(a)\,\,\mbox{for some $a$ satisfying $\|a\|\leq r$}\bigr\},\,\,x\in \mathbb{R}^n.
	\end{align*}
	It is easy to see that $F_r\leq F_{r+1}.$
	For any $m\in S(a)$ for some $\|a\|\leq r,$ since $m\cdot(x-a)+F(a)$ is linear in $x$, $F_r$ is convex; also, since $m\cdot(x-a)+F(a)\leq F(x)$ for any $x\in \mathbb{R}^n$, we have that $F_r(x)\leq F(x)$ for any $x\in\mathbb{R}$ and by the convexity of $F,$ $F_r(x)=F(x)$ if $\|x\|\leq r.$ Lastly, if $m\in S(a)$, then for any $1\leq i\leq n,$ plugging $x=a+\mbox{sign}(m_i)e_i$ into $m\cdot (x-a)+F(a)\leq F(x)$ yields $$
	|m_i|\leq F(a+\mbox{sign}(m_i)e_i)-F(a)\leq \max\bigl(|F(a+e_i)-F(a)|,|F(a-e_i)-F(a)|\bigr),
	$$
	where $\{e_1,\ldots,e_n\}$ is the standard basis of $\mathbb{R}^n.$ Letting $$
	M_r=\sup\Bigl\{\Bigl(\sum_{i=1}^n\max\bigl(|F(a+e_i)-F(a)|,|F(a-e_i)-F(a)|\bigr)^2\Bigr)^{1/2}:\|a\|\leq r\Bigr\},
	$$
	we see that $\|m\|\leq M_r$ for all $m\in S(a)$ for $\|a\|\leq r.$ Therefore, $F_r$ is $M_r$-Lipschitz. Our proof is completed.
	
\end{proof}

\begin{proof}[\bf Proof of Theorem \ref{convexity}]
	
	We first establish our assertion with an additional assumption that $F$ is Lipschitz, which allows us to use the representation in Lemma  \ref{BD}.
	Now as the first line of \eqref{rep1} is a maximization problem and $F(x)$ is convex, we see immediately that $\Phi(\lambda)$ is a convex function for $\lambda\geq 0$. The convexity of $\Phi(\lambda)$ for $\lambda<0$ is based on the second representation of
	\eqref{rep1}.
	For any $\lambda_0,\lambda_1<0$, $\alpha\in (0,1)$, and $u_0,u_1$, since $F$ is convex, we have
	\begin{align*}
		&\alpha F\Bigl(\lambda_0\int_0^1 u_0(s)ds+W(1)\Bigr)+(1-\alpha)F\Bigl(\lambda_1\int_0^1 u_1(s)ds+W(1)\Bigr)\\
		&\geq F\Bigl(\int_0^1(\alpha \lambda_0u_0(s)+(1-\alpha)\lambda_1u_1(s))ds+W(1)\Bigr)\\
		&=F\Bigl(\lambda_\alpha\int_0^1u_\alpha(s)ds+W(1)\Bigr),
	\end{align*}
	where 
	\begin{align*}
		\lambda_\alpha&:=\alpha \lambda_0+(1-\alpha)\lambda_1,\\
		u_\alpha&:=\frac{\alpha \lambda_0u_0+(1-\alpha)\lambda_1u_1}{\alpha \lambda_0+(1-\alpha)\lambda_1}.
	\end{align*}
	On the other hand, 
	\begin{align*}
		&-\alpha\lambda_0\int_0^1\e\|u_0(s)\|^2ds-(1-\alpha)\lambda_1\int_0^1\e\|u_1(s)\|^2ds\\
		&=\e \int_0^1\bigl(-\alpha \lambda_0\|u_0(s)\|^2-(1-\alpha) \lambda_1\|u_1(s)\|^2\bigr)ds.
	\end{align*}
	If we let $-\lambda_0=m_0$ and $-\lambda_1=m_1>0$, then using the convexity of $\|\cdot\|^2$ gives
	\begin{align*}
		-\alpha \lambda_0\|u_0\|^2-(1-\alpha) \lambda_1\|u_1\|^2&=\alpha m_0\|u_0\|^2+(1-\alpha)m_1\|u_1\|^2\\
		&=(\alpha m_0+(1-\alpha)m_1)\frac{\alpha m_0\|u_0\|^2+(1-\alpha)m_1\|u_1\|^2}{\alpha m_0+(1-\alpha)m_1}\\
		&\geq (\alpha m_0+(1-\alpha)m_1)\Bigl\|\frac{\alpha m_0u_0+(1-\alpha) m_1u_1}{\alpha m_0+(1-\alpha)m_1}\Bigr\|^2\\
		&=-\lambda_\alpha \|u_\alpha\|^2.
	\end{align*}
	Therefore, from the second line of \eqref{rep1},
	\begin{align*}
		&\alpha \Bigl(\e F\Bigl(\lambda_0\int_0^1 u_0ds+W(1)\Bigr)-\frac{\lambda_0}{2}\int_0^1\e\|u_0\|^2ds\Bigr)\\
		&+(1-\alpha)\Bigl(\e F\Bigl(\lambda_1\int_0^1 u_1ds+W(1)\Bigr)-\frac{\lambda_1}{2}\int_0^1\e\|u_1\|^2ds\Bigr)\\
		&\geq \e F\Bigl(\lambda_\alpha\int_0^1 u_\alpha ds+W(1)\Bigr)-\frac{\lambda_\alpha}{2}\int_0^1\e\|u_\alpha\|^2ds\geq \Phi(\lambda_\alpha).
	\end{align*}
	Since this holds for any $u_0$ and $u_1,$ taking infimum separatesly for $u_0$ and $u_1$ yields our assertion by using again the second representation \eqref{rep1}. Thus, we have completed our proof when $F$ is Lipschitz.
	
	Next, we consider the general case. From Lemma \ref{lem0}, there exists a sequence of convex and Lipschitz functions $F_r$ that satisfies $F_r\uparrow F$ as $r\to\infty.$ Based on our first part of the proof, it suffices to show that $\Phi_r(\lambda):=\lambda^{-1}\ln \e e^{\lambda F_r(g)}$ converges to $\Phi(\lambda)$ for any $\lambda\in \mathbb{R}.$ Note that $e^{\lambda F_r}\leq e^{\lambda F}$ when $\lambda\geq 0$ and $e^{\lambda F_r}\leq e^{\lambda F_1}$ when $\lambda<0.$ Since $\e e^{\lambda F}$ and $\e e^{\lambda F_1}$ are finite, the dominated convergence theorem implies the desired convergence.
\end{proof}

\begin{proof}[\bf Proof of Theorem \ref{thm2}]
	We first establish our result under an additional assumption that $F$ is Lipschitz, which will allow us to compute derivatives in the following argument. Write $$\Phi(\lambda)=\sqrt{\Var(F(g))}\Psi(\lambda\sqrt{\Var(F(g))})+\e F(g),$$
	where $$\Psi(r):=\frac{1}{r}\ln \e \exp\Bigl( \frac{r(F(g)-\e F(g))}{\sqrt{\Var(F(g))}}\Bigr).$$ 
	From this, our assertion is equivalent to the following inequalities:
	\begin{enumerate}
		\item[(i)] For all $\lambda\leq 0,$
		$$\lambda\Psi(\lambda)\leq \frac{\lambda^2}{2}.$$
		\item[(ii)] For all $\lambda \leq 0,$
		\begin{align*}
			|\Psi(\lambda)|\leq \frac{|\lambda|}{2}.
		\end{align*}
		\item[(iii)] For all $\lambda,\lambda'<0,$
		\begin{align*}
			|\Psi'(\lambda)-\Psi'(\lambda')|\leq \Bigl|\ln \frac{\lambda}{\lambda'}\Bigr|.
		\end{align*}
	\end{enumerate}
	From these, without loss of generality, we assume that $\e F(g)=0$ and $\e F(g)^2=1$ and we aim to show that the inequalities above hold for $\Phi=\Psi$. To establish (i), a direct differentiation gives that
	\begin{align}\label{add:eq2}
		\Phi'(\lambda)&=-\frac{1}{\lambda^2}\ln\e e^{\lambda F(g)}+\frac{\e F(g)e^{\lambda F(g)}}{\lambda\e e^{\lambda F(g)}}.
	\end{align} 
	From the convexity of $\Phi$ and L'Hopital's rule,
	\begin{align}
		\nonumber		\Phi'(\lambda)	&\leq \lim_{\lambda\to 0^-}\Phi'(\lambda)\\
		\nonumber			&=\lim_{\lambda\to 0^-}\frac{1}{\lambda^2}\Bigl(\frac{\lambda\e F(g)e^{\lambda F(g)}}{\e e^{\lambda F(g)}}-\ln\e e^{\lambda F(g)}\Bigr)\\
		\nonumber			&=\lim_{\lambda\to 0^-}\frac{1}{2\lambda}\Bigl(\lambda\Bigl(\frac{\e F(g)^2e^{\lambda F(g)}}{\e e^{\lambda F(g)}}-\Bigl(\frac{\e F(g)e^{\lambda F(g)}}{\e e^{\lambda F(g)}}\Bigr)^2\Bigr)+\frac{\e F(g)e^{\lambda F(g)}}{\e e^{\lambda F(g)}}-\frac{\e F(g)e^{\lambda F(g)}}{\e e^{\lambda F(g)}}\Bigr)\\
		\nonumber			&=\lim_{\lambda\to 0^-}\frac{1}{2}\Bigl(\frac{\e F(g)^2e^{\lambda F(g)}}{\e e^{\lambda F(g)}}-\Bigl(\frac{\e F(g)e^{\lambda F(g)}}{\e e^{\lambda F(g)}}\Bigr)^2\Bigr)\\
		\label{eq2}		&=\frac{1}{2}.
	\end{align}
	Thus, $$\ln \e e^{\lambda F(g)}=\lambda\Phi(\lambda)=\lambda\Bigl(\Phi(0)+\int_0^\lambda\Phi'(t)dt\Bigr)=-\lambda\int_{\lambda}^0\Phi'(t)dt\leq \frac{\lambda^2}{2}.$$
	For (ii), using the convexity of $\Phi$ and \eqref{eq2} leads to
	$$
	\frac{\Phi(\lambda)}{\lambda}=\frac{\Phi(0)-\Phi(\lambda)}{0-\lambda}\leq \lim_{t\to 0^-}\Phi'(t)=\frac{1}{2}.
	$$
	On the other hand, note that from the Jensen inequality and $\e F(g)=0,$ we have $\Phi(\lambda)\leq 0.$
	Therefore, $\Phi(\lambda)/\lambda\geq 0$ and consequently,
	$
	|\Phi(\lambda)|\leq 2^{-1}|\lambda|,
	$
	completing the proof of (ii). 
	For (iii),
	a direct differentiation gives
	\begin{align*}
		\Phi''(\lambda)	&=-\frac{2}{\lambda^3}\Bigl(\frac{\e e^{\lambda F(g)}\ln e^{\lambda F(g)}}{\e e^{\lambda F(g)}}-\ln \e e^{\lambda F(g)}\Bigr)\\
		&\quad+\frac{1}{\lambda^3}\Bigl(\frac{\e e^{\lambda F(g)} (\ln e^{\lambda F(g)})^2}{\e e^{\lambda F(g)}}-\Bigl(\frac{\e e^{\lambda F(g)} \ln e^{\lambda F(g)} }{\e e^{\lambda F(g)}}\Bigr)^2\Bigr).
	\end{align*}
	Note that from Jensen's inequality, the first term is positive and the second term is negative. By dropping the latter and using the convexity of $\Phi$, \eqref{add:eq2}, and \eqref{eq2}, we have that for any $\lambda<0,$
	\begin{align*}
		0\leq \Phi''(\lambda)\leq \frac{2}{|\lambda|^3}\Bigl(\frac{\e e^{\lambda F(g)}\ln e^{\lambda F(g)}}{\e e^{\lambda F(g)}}-\ln \e e^{\lambda F(g)}\Bigr)=\frac{2\Phi'(\lambda)}{|\lambda|}\leq \frac{1}{|\lambda|}.
	\end{align*}
	Therefore, for any $\lambda<\lambda'<0,$
	\begin{align*}
		|\Phi'(\lambda)-\Phi'(\lambda')|&=\int_{\lambda}^{\lambda'}\Phi''(t)dt\leq \int_{\lambda}^{\lambda'}\frac{1}{t}dt= \Bigl|\ln\frac{\lambda'}{\lambda}\Bigr|.
	\end{align*}
	
	For the general case, i.e., dropping the Lipschitz assumption, it also suffices to establish (i), (ii), and (iii) for $\Phi=\Psi.$ Now in the same notation at the end of the proof of Theorem \ref{convexity}, (i), (ii), and (iii) hold for $\Phi_r$ and from the same approximation argument therein, $\Phi$ satisfies (i) and (ii). As for (iii), note that $|F_r|e^{\lambda F_r}\leq \max(|F_1|,|F|)e^{\lambda F_1}$ and the Cauchy-Schwarz inequality implies $$
	\e \max(|F_1|,|F|)e^{\lambda F_1}\leq ((\e |F_1|^2)^{1/2}+\e|F|^2)^{1/2}(\e e^{2\lambda F_1})^{1/2}<\infty.$$
	From $F_re^{\lambda F_r}\to Fe^{\lambda F}$ and the dominated convergence theorem, we see that $\Phi_r'(\lambda)\to \Phi'(\lambda)$ and our proof is finished since $\Phi_r'$ satisfies (iii). 
\end{proof}

{\noindent \bf Acknowledgements}
This work was highly motivated by Dmitry Panchenko, who shared the validity of \eqref{add:eq-1} with an argument via the Gaussian concentration inequality \eqref{Gaussianineq} at the first place and by Francesco Guerra for the inspiring survey \cite{guerra2018emergence}. He thanks Sergey Bobkov for bringing \cite{bobkov2003spectral,borell1973complements} to his attention and Grigoris Paouris and Petros Valettas for many valuable comments, in particular, pointing out the sharp result \eqref{gv1} in the literature. 
This work is  partly supported by NSF grants, DMS-1752184 and DMS-2246715, and Simons Foundation grant 1027727.

\bibliographystyle{plain}
{\footnotesize\bibliography{ref}}

\end{document}